\newtheorem{theorem}{Theorem}
\newtheorem{definition}{Definition}
\begin{document}

\title{On optimal linear prediction}

\author{Inge S. Helland\\Department of Mathematics, University of Oslo, \\P.O. Box 1053, 0316 Oslo, Norway\\ingeh@math.uio.no\\Orcid: 0000-0002-7136-873X}

\date{}

\maketitle

\begin{spacing}{1.35}

\begin{abstract}
The main purpose of this article is to prove that, under certain assumptions in a linear prediction setting, optimal methods based upon model reduction and even an optimal predictor can be provided. The optimality is formulated in terms of the expected mean square prediction error. The optimal model reduction turns out, under a certain assumption, to correspond to the statistical model for partial least squares discussed by the author elsewhere, and under a certain specific condition, the partial least squares predictors is proved to be good compared to all other predictors. It is also proved in this article that the situation with two different model reductions can be fit into a quantum mechanical setting.  Thus, the article contains a synthesis of three cultures: mathematical statistics as a basis, algorithms introduced by chemometricians and used very much by applied scientists as a background, and finally, notions from quantum foundation as an alternative point of view. 
\end{abstract}

\underline{Keywords:}
linear prediction;
model reduction;
optimal model;
optimal prediction;
partial least squares regression;
PLS-like methods;
quantum theory.

\section{Introduction}

There exists a large number of different statistical methods for the linear prediction of a single variable $y$ from $p$ variables $x_1,...,x_p$. The user of statistics is often left to choose the method that is familiar to him or her, or the method for which he/she has access to the relevant software. When $p$ is small, multiple linear regression is the method to choose, but in many practical applications, $p$ is large, often larger than the number $n$ of units where data is available.

For this situation, partial least squares (PLS) regression is a method that is emerging and recommended also by some  statisticians. The method was developed by chemometricians. It is related to Herman Wold's theory of latent variables, but in a regression setting, it is a simple, well-defined algorithmic method; see Appendix 1, or for a standard reference, Martens and N\ae s (1989). PLS regression has grown popular among very many applied researchers; see the review by Mehmod and Ahmed (2015). It was linked to a statistical model by Helland (1990), see also Helland (1992, 2001), N\ae s and Helland (1993), and Helland and Alm\o y (1994). The model was generalized to the case of multivariate $y$ and tied to Dennis Cook's envelope model in Cook et al. (2013). More recently, Cook and Forzani (2019) have studied the asymptotics of PLS regression as $n$ and $p$ tend to infinity and have given strong evidence that this should be the method of choice in the case of abundant regression, where many of the predictors $x_i$ contribute information about the response $y$.

For the present article, the motivation for PLS regression and related methods that are advocated in Helland et al. (2012) is particularly relevant. Here the random $x$ model is the point of departure, and a reduced model is approached through the group $G$ of rotations of the eigenvectors for the $x$-covariance matrix together with scale transformations for the regression coefficients. Below, this group and the orbits of the group will play a fundamental role. (An orbit of $G$ acting on some space $\Omega$ is the set $\{g\theta \}$ as $g$ runs through $G$ for some fixed $\theta\in\Omega$. It is easy to see that, if $\theta_1$ belongs to this orbit, then $\{g\theta_1 \}=\{g\theta \}$.)

A completely different area where multiple linear regression may be one of the building blocks, is machine learning, an important part of artificial intelligence. There is a large literature on machine learning and also a growing literature on the connections between artificial intelligence and statistical modeling. Of special interest for the present article is that there recently have been several investigations related to links between machine learning and quantum-mechanical models, see the review article by Dunjko and Briegel (2019). Given these investigations and the strong link between machine learning and statistics, it is strange that there has up to now been very little published research on possible links between statistical modeling and quantum mechanics. It is one purpose of the present article to discuss such a link. See also parts of the book by Helland (2021).

After that book was finished, this author has written several articles on the foundation of quantum theory, and some of these have been published in leading physics journals. My final approach towards the foundation is now given in Helland (2024a, b, c, d). This is also part of the basis for the present article.

The plan of the article is as follows: In Section 2, I summarize parts of the theory proposed in Helland (2024a), giving an alternative foundation of quantum theory. In the rest of the article, this is used in a statistical setting. In Section 3, the setting is specified to linear prediction, and a specific model reduction $\theta$ is defined in relation to a chosen dimension $m$, the model reduction which, according to my earlier articles on this topic, gives the statistical model corresponding to partial least squares (PLS) regression. This model is elaborated on in Section 4, where it is shown to correspond to a concrete quantum-mechanical setting, giving an operator $A^\theta$ corresponding to $\theta$, an operator defined on a Hilbert space $\mathcal{H}$. 

A main aim of this paper is to show that the PLS model is optimal under some assumptions, and for this purpose, a general model reduction $\eta$ with dimension $m$ is defined in Subsection 4.2. Then, in Section 5, the first optimality theorem, Theorem 3, is stated, interpreted, and proved. In Section 6, criteria for optimality of the PLS model are discussed. In Section 7, a possible basis for discussing the optimality of PLS-type regression compared to other methods like ridge regression is discussed. The discussion is completed by giving a concrete criterion for optimality of ordinary PLS regression in Section 8, and in Section 9, some concluding remarks are given.

Readers who are only interested in the optimality properties of partial least squares regression may concentrate on the theorems of the last few sections. The main conclusions of this article are given in Theorem 5 and Theorem 7.

\section{On quantum foundation.}

The present Section will contain some technicalities, which may be skipped in the first reading.

Traditional quantum mechanics is very formal in the sense that it is based on a underlying complex Hilbert space where physical variables are associated with self-adjoint operators and where states of a system are described by normalized vectors in this Hilbert space. For introductions to this theory, see textbooks like Ballentine (1989) or Sakurai (1994).

Recently, several physicists have tried to find a less formal foundation behind this formalism, see for instance Hardy (2015) or Masines and M\"{u}ller, (2011). Parts of my own approach will be sketched below.

\subsection{The basic theorem}

The fundamental notion in my approach towards quantum foundations is that of a \emph{theoretical variable} connected in a given context to an observer or to a communicating group of observers. In Helland (2024a, b, c, d) these variables were mostly physical variables. Later in the present article, they will be statistical parameters relative to some model. The essence of the theory turns out to be the same.

Theoretical variables are divided into \emph{accessible} and \emph{inaccessible} variables. In Helland (2024a) a physical variable was  said to be accessible if the observer(s) in principle in some future can obtain as accurate values of this variable as he/she/they wish to. In this article, I will let the variables be statistical parameters, let the inaccessible variables be parameters that are too extensive to be estimable with the available data, and let the accessible variables be parameters that can be estimated. Again, the theory from Helland (2024a) can be adapted. From a mathematical point of view, I require that if $\lambda$ is an accessible variable and $\theta=f(\lambda)$ for some function $f$, then $\theta$ is also accessible.

One postulate from Helland (2024a) is crucial for the theory there. I assume that there is a big inaccessible variable $\phi$, varying in a space $\Omega_\phi$, and I assume that all accessible variables can be seen as functions of this $\phi$.  In the present article, the inaccessible parameter $\phi$ will be given a very concrete definition. The article considers situations where there is a total parameter which cannot be estimated, and $\phi$ is this total parameter.

Given this postulate in Helland (2024a), the theory is completely rigorous from a mathematical perspective. First, consider the following definitions:
\bigskip

\begin{definition}\label{de1}
\textit{The accessible variable $\theta$ is called \emph{maximal} if the following holds: If $\theta$ can be written as $\theta=f(\psi)$ for a function $f$ that is not surjective, the theoretical variable $\psi$ is not accessible. In other words: $\theta$ is maximal under the partial ordering defined by $\alpha\le \beta$ iff $\alpha=f(\beta )$ for some function $f$.}\end{definition}

\begin{definition}\label{de2} \textit{Let $\theta$ and $\eta$ be two maximal accessible variables in some context, and let $\theta=f(\phi)$ for some function $f$. If there is a transformation $k$ of $\Omega_\phi$ such that $\eta(\phi)=f(k\phi)$, we say that $\theta$ and $\eta$ are \emph{related} (relative to this $\phi$). If no such $k$ can be found, we say that $\theta$ and $\eta$ are non-related relative to the variable $\phi$.}\end{definition}
\bigskip

The following theorem is then derived:

\begin{theorem}\label{th1}
\textit{Consider a context where there are two different related maximal accessible variables $\theta$ and $\eta$. Assume that both $\theta$ and $\eta$ are real-valued or real vectors, taking at least two values.  Make the following additional assumption:}
\smallskip

\textit{On one of these variables, $\theta$, there can be defined a transitive group of actions $G$ with a trivial isotropy group and with a left-invariant measure $\nu$ on the space $\Omega_\theta$.}
\smallskip

\textit{Then there exists a Hilbert space $\mathcal{H}$ connected to the situation, and to every (real-valued or vector-valued) accessible variable there can be associated a symmetric operator on $\mathcal{H}$.}
\end{theorem}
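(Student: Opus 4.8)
The plan is to construct the Hilbert space from the group-theoretic structure supplied by the hypotheses and then to realize accessible variables as operators via the action of the group. Since $G$ acts transitively on $\Omega_\theta$ with trivial isotropy, the orbit map $g\mapsto g\theta_0$ (for a fixed base point $\theta_0$) is a bijection of $G$ onto $\Omega_\theta$, so I can identify $\Omega_\theta$ with $G$ itself, and the left-invariant measure $\nu$ becomes a left Haar measure on $G$. The natural candidate for the Hilbert space is then $\mathcal{H}=L^2(\Omega_\theta,\nu)=L^2(G,\nu)$, the complex square-integrable functions on the orbit. The group acts on this space by the left-regular representation $U(g)\psi(\theta)=\psi(g^{-1}\theta)$, and left-invariance of $\nu$ guarantees that each $U(g)$ is unitary.

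The first main step is to associate an operator with the variable $\theta$ itself. Because $\theta$ is real-valued or vector-valued, I would define the multiplication operator $A^\theta$ by $(A^\theta\psi)(\theta)=\theta\,\psi(\theta)$ (componentwise in the vector case), which is symmetric (self-adjoint on its natural domain) on $\mathcal{H}$; more generally, any accessible variable that is a function $\ell=\ell(\theta)$ on $\Omega_\theta$ gives a multiplication operator $A^\ell$ with $(A^\ell\psi)(\theta)=\ell(\theta)\psi(\theta)$, symmetric whenever $\ell$ is real-valued. This handles every accessible variable that can be expressed as a function of the single maximal variable $\theta$.

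The second, and more delicate, step is to handle the other maximal variable $\eta$ and, through it, accessible variables that are not functions of $\theta$. Here I would invoke the relatedness hypothesis from Definition~\ref{de2}: since $\theta$ and $\eta$ are related relative to $\phi$, there is a transformation $k$ of $\Omega_\phi$ with $\eta(\phi)=\theta(k\phi)$ in the appropriate sense, so that $\eta$ is carried to $\theta$ by the transported group action. Conjugating the multiplication operator $A^\theta$ by the unitary implementing $k$ (or equivalently by the appropriate intertwiner built from $U$) yields a symmetric operator $A^\eta$ representing $\eta$ on the same Hilbert space $\mathcal{H}$. More generally, the transitive $G$-action lets me build a group representation, and via an appropriate analog of the Stone/spectral correspondence I would obtain a symmetric (generator-type) operator for each coordinate of each accessible variable.

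I expect the main obstacle to lie precisely in the second step, namely in making the passage from the group action to genuinely \emph{self-adjoint} (not merely symmetric) operators and in ensuring that relatedness gives a single unitary $U(k)$ on $\mathcal{H}=L^2(\Omega_\theta,\nu)$ that consistently intertwines the two variables. The trivial-isotropy and transitivity assumptions are what make the orbit identification with $G$ clean, but unbounded multiplication operators require care with domains, and one must verify that $k$ preserves (or quasi-preserves) the measure $\nu$ so that $U(k)$ is unitary; if $k$ only preserves $\nu$ up to a density, a Radon--Nikodym factor has to be inserted to keep the operator symmetric. The statement asks only for symmetric operators, so I would aim for that weaker conclusion and note where essential self-adjointness can be recovered under mild regularity of $\ell$ and $\nu$.
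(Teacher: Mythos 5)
First, a point of reference: the paper does not actually contain a proof of Theorem~\ref{th1}. It is imported verbatim from Helland (2024a), with the refinement of Helland (2024d), so your proposal can only be measured against the construction in those cited works. Your first step does agree with that construction: transitivity plus trivial isotropy identifies $\Omega_\theta$ with $G$, the Hilbert space is taken as $\mathcal{H}=L^2(\Omega_\theta,\nu)$, and $\theta$ itself (and any real function $\ell(\theta)$) is represented by a real multiplication operator, which is symmetric. That part is sound.

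The genuine gap is in your second step, and it is not the domain/self-adjointness or Radon--Nikodym issue you flag. The transformation $k$ supplied by relatedness (Definition~\ref{de2}) acts on $\Omega_\phi$, not on $\Omega_\theta$, and in the situation the theorem addresses it \emph{cannot} descend to a point transformation $\bar{k}$ of $\Omega_\theta$: if it did, then $\eta(\phi)=\theta(k\phi)=\bar{k}(\theta(\phi))$ would exhibit $\eta$ as a function of $\theta$, and maximality of both variables would force $\bar{k}$ to be bijective, making $\theta$ and $\eta$ equivalent rather than two genuinely different maximal variables. Consequently there is no composition operator $\psi\mapsto\psi\circ k^{-1}$ on $L^2(\Omega_\theta,\nu)$, with or without a Radon--Nikodym correction; ``the unitary implementing $k$'' that you propose to conjugate by simply does not exist as a change-of-variables operator. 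Nor can an ``intertwiner built from $U$'' do the job: conjugating $A^\theta$ by any element of the left-regular representation returns another multiplication operator, and all multiplication operators commute, whereas a complementary variable $\eta$ must be represented by an operator that does \emph{not} commute with $A^\theta$ (think of position and momentum, where the required unitary is the Fourier transform, not a point map). Constructing a unitary $U(k)$ on $\mathcal{H}$ out of the action of $k$ on the larger, inaccessible space $\Omega_\phi$ is precisely the hard content of the theorem: in Helland (2024a) it required an additional postulate (a unitary representation of a group on $\Omega_\phi$ generated by $k$ and a lift of $G$), and the contribution of Helland (2024d) --- noted explicitly in Section 2 of this paper --- is to show that this assumption can be removed. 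Your proposal assumes this step rather than proving it, so as it stands it establishes operators only for variables that are functions of $\theta$, not for $\eta$ or for general accessible variables.
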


A group $G$ is called transitive on the space $\Omega_\theta$ if for every $\theta_1, \theta_2\in\Omega_\theta$, there exists a $g\in G$ such that $\theta_2 = g\theta_1$; that is, the group has only one orbit. The isotropy group connected to $\theta\in\Omega_\theta$ is the set of $g\in G$ such that $g\theta=\theta$. For a transitive group, the isotropy groups connected to different $\theta$ are in one-to-one correspondence. The measure $\mu$ on the measurable space $(\Omega_\theta ,\mathcal{F})$ is left-invariant if $\mu (gB)=\mu(B)$ for every $B\in\mathcal{F}$ and $g\in G$.

An operator $A$ is symmetric if $(A\bm{v},\bm{v})=(\bm{v}, A\bm{v})$ for every vector $\bm{v}$ in the domain of $A$. This is closely related to the more technical properties of being self-adjoint; see Hall (2013) for a very thorough discussion.

Here, $(\bm{u},\bm{v})$ is the scalar product in the Hilbert space. In the finite-dimensional case it can be written as $(\bm{u},\bm{v})=\bm{u}^\dagger \bm{v}$, where the vectors are column vectors, and $\bm{u}^\dagger$ is the complex conjugate row vector corresponding to $\bm{u}$. This scalar product is very important in quantum mechanics.

In Helland (2024a) an additional assumption regarding a certain unitary representation of the group $G$ was made. In Helland (2024d) it was shown that this assumption is not needed. 

Theorem 1 shows that an essential part of the Hilbert space formalism of quantum theory follows from weak assumptions. Further theorems and implications are given in the above references. I mention only here that in the discrete case, eigenvalues of the operators of Theorem 1 are the possible values of the associated theoretical variables and the quantum states can be given by the eigenvectors of the operators. These eigenvectors are in one-to-one correspondence with focused questions of the form `What is $\theta$?' together with sharp answers of the form `$\theta=u$'. Continuous variables may be treated in a similar way if we allow `eigenvectors' in the form of delta-functions.

Finally, probabilities are introduced by Born's rule. In the simplest case a probability is given by the square of a probability amplitude - a scalar product between eigenvectors of two different operators.

The important assumption in Theorem 1 is that in the given setting there exist two different maximal accessible variables, what Niels Bohr called complementary variables. An example from physics is the position and the momentum of a particle. By Heisenberg's inequality, these two accessible variables are maximal. One purpose of the present article is to give an example of a statistical application of the same theory. For brevity, the theory for this application is not developed much further here than verifying that the assumptions of Theorem 1 are satisfied.

As already mentioned, any relevant statistical context will then be one where the total parameter $\phi$ is so extensive that it cannot be estimated from the available data. We can then consider two different maximal accessible parameters $\theta =\theta(\phi)$ and $\eta=\eta(\phi)$.

\section{A setting for linear prediction.}

Consider a statistical setting with a large number $p$ of possible predictor variables $\bm{x} =(x_1,...,x_p)'$ and a response $y$. Assume that these variables have a joint distribution, and that we have observed $n$ samples from this distribution. For simplicity, I will assume in this article that all variables are centered on zero expectation, See Appendix 1 for centering.

This introduces the following parameters: $\mathbf\mathit{\Sigma}_{xx}=\mathrm{cov}(\bm{x})$, $\bm{\sigma}_{xy} = \mathrm{cov}(\bm{x},y)$, $\sigma^2 =\mathrm{var}(y|\bm{x})=\mathrm{var}(y)-\bm{\sigma}_{xy}'\mathbf\mathit{\Sigma}_{xx}^{-1}\bm{\sigma}_{xy}$ and $\bm{\beta}=(\beta_1,...,\beta_p)'=\mathbf\mathit{\Sigma}_{xx}^{-1}\bm{\sigma}_{xy}$. Let the collection of these parameters be denoted by $\phi$.

Let us assume that we know a new vector $\bm{x}_{new}$ with the same distribution as $\bm{x}$, and want to predict the $y$ corresponding to this vector. By well-known statistical theory, see Hastie et al. (2009), the best linear predictor, if $\bm{\beta}$ is known, is given by $\widehat{y}=\bm{\beta}\cdot\bm{x}_{new}$.

Throughout this article, consider a statistician $B$. He has data $\bm{X},\bm{y}$, consisting of $n$ samples from the above distribution, and wants to estimate $\bm{\beta}$. Since $p$ is large and $n$ may be moderate, the above set of parameters may be too large for him. He may consider two estimators $\widehat{\theta}$ and $\widehat{\eta}$, both based upon parameter reduction. 

Specifically, the estimator $\widehat{\theta}$ is based on the following model reduction.

Let $\bm{d}_1,...,\bm{d}_p$ be the normalized eigenvectors of $\mathbf\mathit{\Sigma}_{xx}$, assumed to be positive definite, and consider the decomposition
\begin{equation}
\bm{\beta}=\sum_{j=1}^p \gamma_j \bm{d}_j.
\label{11}
\end{equation}

In agreement with the PLS model in Helland et al. (2012) and the envelope model of Cook et al. (2013), fix a number $m$, and consider estimation/prediction under the hypothesis:
\smallskip

$H_m$: \emph{There are exactly $m$ nonzero terms in (\ref{11}).}
\smallskip

There are two mechanisms by which this number of terms can be reduced: 1) Some $\gamma$'s are zero at the outset. 2) There are coinciding eigenvalues of $\mathbf\mathit{\Sigma}_{xx}$, and then the eigenvectors may be rotated in such a way that there is only one in the relevant eigenspace that is along $\bm{\beta}$.

Considering $H_m$ as a model reduction, it is shown in Helland (1990) and Cook et al. (2013) that it can be formulated in the following equivalent way: Let $\theta=\theta_m$ be defined by the Krylov set $\bm{\sigma}_{xy}, \mathbf\mathit{\Sigma}_{xx}\bm{\sigma}_{xy}. \mathbf\mathit{\Sigma}_{xx}^{2}\bm{\sigma}_{xy},..., \mathbf\mathit{\Sigma}_{xx}^{m-1}\bm{\sigma}_{xy}$, then $m$ is the smallest number such that $\bm{\beta}$ is a linear function of $\theta_m$. 

For the purpose of this article, however, we will define $\theta=(\gamma_1 \bm{d}_1,...,\gamma_m\bm{d}_m)$, with all $\gamma_i\ne 0$, and define the model under $H_m$:
\begin{equation}
\beta=\beta_m =\beta(\theta)=\sum_{j=1}^m \gamma_j \bm{d}_j.
\label{12}
\end{equation}

Note that (\ref{11}) is invariant under permutations of the terms, so we might as well take the non-trivial terms to be the first $m$ terms.

The model reduced by the hypothesis $H_m$ is equivalent to the PLS model of Helland (1990), and is a special case of the envelope model of Cook (2018). Note that an equivalent formulation of the PLS model is that the population PLS algorithm stops automatically at step $m$; see Appendix 1.

It is interesting that this model reduction may be connected to a particular group $K$ acting on the parameter $\bm{\beta}$, also involving $\mathbf\mathit{\Sigma}_{xx}$, that is, a group on the parameter space $\Omega_\phi$:

\begin{definition}\label{de4} \textit{Let the group $K$ be defined by orthogonal matrices acting on all the vectors $\bm{d}_j$ in (\ref{11}), and in addition separate scale transformations of the parameters $\gamma_j$: $\gamma_j\mapsto g_j(\gamma_j)$ for some bijective continuous functions $g_j$ on the line.}\end{definition}

The first part is equivalent to orthogonal transformations of $\mathbf\mathit{\Sigma}_{xx}$. It can be induced by rotating the vector $\bm{x}$, and in addition by changing the sign of this vector.

\begin{theorem}\label{th5} \textit{If and only if  the bijective continuous functions $g_j$ are such that $g_j(0)=0$, the orbits of the group $K$ are determined by: a given $m$ and the hypothesis $H_m$.}
\end{theorem}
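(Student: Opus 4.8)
The plan is to prove both directions by tracking, under the action of $K$, the number $m$ of nonzero terms in the decomposition (\ref{11}), which I would first recast in an intrinsic, basis-free form. Since rotating the eigenvectors inside a degenerate eigenspace of $\mathbf\mathit{\Sigma}_{xx}$ changes the individual coefficients $\gamma_j$ but not the underlying geometry, the quantity that $H_m$ really constrains is the number of distinct eigenspaces of $\mathbf\mathit{\Sigma}_{xx}$ onto which $\bm{\beta}$ has a nonzero projection --- equivalently, by the Krylov-sequence reformulation already quoted, the smallest number of terms needed in (\ref{11}). I would begin by fixing this intrinsic reading of $m$ and noting that each $g_j$, being a continuous bijection of the line, is strictly monotone, so that $g_j(\gamma)=0$ if and only if $\gamma=0$ precisely when $g_j(0)=0$.

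For the ``if'' direction, assume $g_j(0)=0$ for all $j$. I would show that each of the two generators of $K$ preserves $m$. The orthogonal part sends $(\mathbf\mathit{\Sigma}_{xx},\bm{\beta})$ to $(O\mathbf\mathit{\Sigma}_{xx}O',O\bm{\beta})$; this is a rigid motion carrying each eigenspace of $\mathbf\mathit{\Sigma}_{xx}$ to the corresponding eigenspace of $O\mathbf\mathit{\Sigma}_{xx}O'$ and the projection of $\bm{\beta}$ to that of $O\bm{\beta}$, so a projection is nonzero after the action exactly when it was before, and $m$ is unchanged. The scale part fixes $\mathbf\mathit{\Sigma}_{xx}$ and replaces $\gamma_j$ by $g_j(\gamma_j)$; with $g_j(0)=0$ the zero/nonzero pattern of the coefficients is preserved inside every eigenspace, so again $m$ is unchanged. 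Hence $m$ is constant along every orbit and each orbit lies inside the set $H_m$ for its own value of $m$. For the reverse containment I would argue transitivity at fixed spectrum: given two parameter points with the same $m$, use the orthogonal part to align their eigenframes and their collections of hit eigenspaces, and then use the strict monotonicity of the $g_j$ (together with the sign changes $\bm{d}_j\mapsto -\bm{d}_j$ available in the orthogonal part) to match the nonzero coefficients, while the zero coefficients stay zero on both sides because $g_j(0)=0$.

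For the ``only if'' direction, suppose $g_{j_0}(0)=c\neq 0$ for some index $j_0$. I would exhibit a single parameter point that the group moves out of its hypothesis: take any $\phi$ whose decomposition (\ref{11}) has $\gamma_{j_0}=0$ and exactly $m$ nonzero terms, and apply the scale transformation acting only in the $j_0$-th coordinate. The $j_0$-th coefficient becomes $g_{j_0}(0)=c\neq 0$, so the image has $m+1$ nonzero terms and lies in $H_{m+1}$. Thus the orbit of $\phi$ meets both $H_m$ and $H_{m+1}$, the orbits can no longer be labelled by $m$, and this is exactly the failure of the orbits being determined by $m$ and $H_m$, completing the contrapositive.

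The step I expect to be the main obstacle is the reverse containment in the ``if'' direction --- making precise the sense in which the orbits are determined by $m$. The eigenvalue spectrum of $\mathbf\mathit{\Sigma}_{xx}$ is itself invariant under $K$, and with $g_j(0)=0$ so is the set of eigenspaces actually hit by $\bm{\beta}$, so a naive reading would make the orbits strictly finer than $H_m$. The delicate point is therefore to state the transitivity at the right level, with the spectral data held fixed and the labelling of eigenspaces absorbed into the orthogonal action, so that $m$ becomes the genuine complete invariant; the degenerate-eigenvalue case, where rotations inside an eigenspace must be used to realize the minimal number of terms, is where this bookkeeping must be done carefully. The two preservation computations and the contrapositive, by contrast, are routine once the intrinsic definition of $m$ is in place.
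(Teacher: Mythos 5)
Your core mechanism is the same as the paper's. The paper's two-sentence proof consists of observing that, precisely when $g(0)=0$, the scale group on a single coefficient has exactly the two orbits $\{0\}$ and $\{\gamma\ne 0\}$, and that combining this over the coordinates with the (transitive) orthogonal action on the frame makes an orbit of $K$ the set where exactly $m$ of the $\gamma_k$ are nonzero, i.e.\ the hypothesis $H_m$. Your forward containment (each generator preserves the zero/nonzero pattern because a continuous bijection with $g_j(0)=0$ kills no nonzero coefficient and creates none) and your contrapositive for the ``only if'' direction (an element with $g_{j_0}(0)=c\ne 0$ moves a point of $H_m$ into $H_{m+1}$) are this same argument written out in more detail, and both are correct.

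The genuine gap is the step you yourself flag as the main obstacle --- transitivity of $K$ on each $H_m$ class --- and your sketched repair does not close it. Under your reading, where the group acts on $(\Sigma_{xx},\bm{\beta})$ so that eigenvalue labels travel with the eigenvectors, the multiset of eigenvalues whose eigenspaces $\bm{\beta}$ hits is itself a $K$-invariant: the scale part fixes directions, and the orthogonal part carries each eigenvalue along with its eigenvector. Concretely, let $\Sigma_{xx}$ have distinct eigenvalues $\lambda_1\ne\lambda_2$ with eigenvectors $\bm{d}_1,\bm{d}_2$; then $\bm{\beta}=\gamma_1\bm{d}_1$ and $\bm{\beta}=\delta_2\bm{d}_2$ both satisfy $H_1$ and have identical spectral data, yet lie on different orbits, since any $O$ with $O\Sigma_{xx}O'=\Sigma_{xx}$ sends $\bm{d}_1$ to $\pm\bm{d}_1$, and the scale part cannot rotate directions at all. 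So even ``at fixed spectral data'' $m$ is not a complete invariant, and no amount of bookkeeping inside degenerate eigenspaces will make it one. The theorem holds only under the reading that the paper's proof implicitly adopts (and which its loose phrasing ``a group on $\Omega_\phi$'' obscures): the object acted on is the bare, unordered decomposition data $\{(\gamma_j,\bm{d}_j)\}$ of (\ref{11}), with no eigenvalue labels attached --- the paper's remark that (\ref{11}) is invariant under permutation of terms is what licenses treating it as unordered. On that object the orthogonal part is transitive on orthonormal frames, the scale part is transitive on nonzero coefficient values, and the count of nonzero coefficients is the complete invariant. The fix is therefore not to refine your eigenspace-intrinsic reading of $m$, but to abandon it.
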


\begin{proof}
If and only if $g(0)=0$, the group on $\gamma$ defined by $\gamma\mapsto g(\gamma)$ has two orbits: 1) the single value $\gamma=0$; 2) the set of all $\gamma$ such that $\gamma\ne 0$. Going to the whole group $K$, this implies an orbit where $p-m$ of the $\gamma_k$'s are zero and $m$ of the $\gamma_k$'s are non-zero. That is, exactly the hypothesis $H_m$.
\end{proof}

\begin{definition}\label{de1} \textit{Define the group $G$ acting on $\theta$ by orthogonal transformations of the vectors $\bm{d}_j$ in (\ref{12}) and in addition separate linear scale transformations of the parameters $\gamma_j$: $\gamma_j\mapsto \alpha_j\gamma_j$ with $\alpha_j > 0$.}\end{definition}

Taking into account that the changes of sign $\gamma_j\mapsto -\gamma_j$ may also be obtained by orthogonal transformations of the $\bm{d}_i$'s, this implies that the group $G$ is transitive, and it also has a trivial isotropy group. The elements $g\in G$ are then in one-to-one correspondence with the values of $\theta$.

\section{A quantum-mechanical setting related to a model reduction}

\subsection{The group}

 Let $\theta$ in any case be a function of the nonzero parameters $\gamma_1,...,\gamma_m$ and the $\mathbf\mathit{\Sigma}_{xx}$-eigenvectors $\bm{d}_1,..., \bm{d}_m$, all normalized: $\theta = (\gamma_1\bm{d}_1,...,\gamma_m\bm{d}_m)$. The elements of the group $G$ are given by 1) a matrix $\bm{O}$ with orthonormal columns such that $(\bm{d}_1,...\bm{d}_m)\mapsto \bm{O}(\bm{d}_1,...,\bm{d}_m)$; 2) positive scalars $\alpha_j$ giving scale transformations $\gamma_j\mapsto\alpha_j\gamma_j$.

A left-invariant measure of the scale transformation $\gamma\mapsto\alpha\gamma$ is given by $\mu(d\gamma)=d\gamma/\gamma$ on $\{\gamma:\gamma>0\}$. Negative signs of $\gamma$ may be tackled through a sign change of $\bm{d}$, so this implies that $\mu$ can be extended to the whole line except $\gamma=0$. The left-invariant measure on the $m$-dimensional rotation group is given by the uniform measure $\sigma$ on the $m$-dimensional sphere in $\mathbb{R}^p$, and the change of sign by $\nu(+)=\nu(-)=1/2$. This determines the measure $\nu$ on $\Omega_\theta$.

Theorem 1 gives, in general, a Hilbert space and, in particular, an operator $A^\theta$ on this Hilbert space under certain conditions for the case when we, in addition, have a complementary parameter $\eta$.
 One of the conditions behind the theorem is the existence of a transitive group $G$ acting upon $\theta$. This can be taken as the group defined above.

\subsection{Another model reduction}

Again, consider a statistical setting with a large number $p$ of possible predictor variables $\bm{x} =(x_1,...,x_p)'$ and a response $y$. Assume that these variables have a joint distribution and that we have observed $n$ samples from this distribution.

Again, this introduces the parameters: $\mathbf\mathit{\Sigma}_{xx}=\mathrm{cov}(\bm{x})$, $\bm{\sigma}_{xy} = \mathrm{cov}(\bm{x},y)$, $\sigma^2 =\mathrm{var}(y|\bm{x})=\mathrm{var}(y)-\bm{\sigma}_{xy}'\mathbf\mathit{\Sigma}_{xx}^{-1}\bm{\sigma}_{xy}$ and $\bm{\beta}=(\beta_1,...,\beta_p)'=\mathbf\mathit{\Sigma}_{xx}^{-1}\bm{\sigma}_{xy}$. Let again the collection of these parameters be denoted by $\phi$, varying in some space $\Omega_\phi$.

There are many ways to perform a model reduction in a prediction context. Assume that the statistician $B$ also considers another reduction $\eta$ based upon the same inaccessible parameter $\phi$, so $\eta=\eta(\phi)$.

More specifically, I assume: Fix some number $m$, for $j=1,...,m$ let $\eta_j(\cdot)$ be a $p$-dimensional vector function defined on $\Omega_\phi$, and put $\eta(\phi)=(\eta_1(\phi),...,\eta_m(\phi))$. For linear prediction, let the reduced regression parameter be $\beta'_m =\beta(\eta,\phi')$ for some function $\beta(\cdot)$, where $\phi'$ is chosen so that $(\eta(\phi),\phi')$ is in one-to-one correspondence with $\phi$. I will suppose that $\beta$ can be estimated under the hypothesis
\bigskip

\textit{$H_m':$ $\beta'_m =\beta(\eta,\phi')$ is accessible, but maximally so: If $\eta=f(\xi)$ for some function $f$ which is not surjective, then  $\beta(\xi,\phi')$  is not accessible.}
\bigskip

This should be compared to the hypothesis $H_m$ that was made in connection to the specific reduction $\theta$ of Section 4. Note that I assume that $\eta$ also, in relation to the regression coefficient $\beta$, has just $m$ vector components, and that both $\theta$ and $\eta$ can be seen as maximal accessible parameters for $A$. Let $M$ be a fixed group acting on $\Omega_\phi$ which transforms such sets of $m$ $p$-dimensional vectors into other sets of $m$ $p$-dimensional vectors.

Then, assuming some fixed value $\theta_1$ of $\theta$, we can first find a $\phi_1 \in \Omega_\phi$ such that $\theta(\phi_1)=\theta_1$. Given some fixed value $\eta_2$ of $\eta$ and a $\phi_2\in \Omega_\phi$ such that $\eta(\phi_2)=\eta_2$, then either $\phi_1$ and $\phi_2$ lie on the same orbit of $M$, or they belong to different orbits. In the first case, there is a $k\in M$ such that $\phi_2 = k\phi_1$. In the second case, there is an element $k\in M$, a $\phi_3\in\Omega_\phi$ and a bijective function $f$ such that $\phi_2 =f(\phi_3)$ and $\phi_3 = k\phi_1$. Since bijective functions in $\Omega_\phi$ imply equivalent model reductions $\eta(\phi)$, this means that one can without loss of generality assume a transformation $k$ such that $\eta_2=\eta(\phi_2)=\theta(k\phi_1)$ while $\theta_1 =\theta(\phi_1)$. Since $\theta_1$ and $\eta_2$ were arbitrarily chosen, this implies that $\theta$ and $\eta$ are related as defined in Definition 2. The crucial assumptions are that both parameters have the same dimension and are defined as functions on the same space $\Omega_\phi$.

But this implies that the conditions of Theorem 1 are satisfied. There exist operators $A^\theta$ and $A^\eta$ in the same Hilbert space $\mathcal{H}$ corresponding to the two model reductions. 

Later, we will study the two parametric functions $\beta(\theta)$ and $\beta(\eta)$ (strictly speaking $\beta(\eta,\phi')$) in relation to the true regression parameters $\beta$.

\section{An optimality theorem for model reduction}

Let us assume that $\theta=\theta(\phi)$ is the PLS model reduction with a fixed number $m$ of relevant components as described in Section 3, and let $\eta=\eta(\phi)$ be another $m$-dimensional model reduction as described in Subsection 4.2. Here, $\phi$ is the parameter of the full model. Assume that there is a continuous group $M$ acting upon $\Omega_\phi$ which transforms the specific sets of $m$ $p$-dimensional vectors into similar sets of $p$-dimensional vectors.

The purpose of this Section is to investigate when the PLS model gives the best model reduction for prediction. Seen from an asymptotic point of view, there are  several criteria in the literature for when PLS regression performs well in a prediction setting. In Helland and Alm\o y (1994) a criterion was formulated in terms of relevant eigenvalues. Cook and Forzani (2019) indicated that PLS performs well in abundant regression where many predictors contribute information about the response. In this article, I will make exact computations and formulate a relatively concrete criterion. For brevity, write $\beta(\eta)=\beta(\eta,\phi')$, where $(\eta,\phi')$ is in one-to-one correspondence with $\phi$.
\bigskip

\textbf{Assumption $A$.} \textit{Let $\theta$ with $m$ components be the PLS model assumption, let $\eta$ denote another $m$-dimensional model reduction, and let $\bm{\beta}$ be the true regression coefficient. Assume that, relative to the distribution of $\bm{x}$}
\begin{equation}
\mathrm{Cov}([(\bm{\beta}(\eta)-\bm{\beta}(\theta))\cdot\bm{x}], [(\bm{\beta}(\eta)+\bm{\beta}(\theta)-2\bm{\beta})\cdot\bm{x}])>0
\label{32a}
\end{equation}
\smallskip

Note that if $\bm{\beta}(\theta)$ is close to the true regression coefficient $\bm{\beta}$, this is guaranteed to hold; see later.

I will prove the following theorem:

\begin{theorem}\label{th6} \textit{Let $(\bm{x},y)$ have a joint distribution with all second order parameters given by the parameter $\phi$. Assume that all variables have expectation $0$ and that the $x$-covariance matrix $\mathbf\mathit{\Sigma}_{xx}$ is positive definite. Make the Assumption $A$. Then the $m$-dimensional reduction of $\phi$ given by the PLS-model is better than the $m$-dimensional reduction given by $\eta$, in the sense that $\mathrm{E}_{(\bm{x},y)}(y-\beta(\cdot)\cdot \bm{x})^2$ is minimized. Conversely, if the PLS model gives a better prediction than $\eta$, then Assumption $A$ must hold.}\end{theorem}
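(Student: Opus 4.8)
The plan is to reduce the entire statement to a single exact identity, namely that the gap in expected squared prediction error between the two reductions is precisely the covariance appearing in (\ref{32a}). Once this identity is in hand, both the direct implication and its converse follow at once from the sign of that covariance. I would work at the population level, treating $\bm{\beta}(\theta)$ and $\bm{\beta}(\eta)$ as fixed vectors determined by the (non-random) parameter $\phi$, so that the only randomness entering $\mathrm{E}_{(\bm{x},y)}$ comes from $(\bm{x},y)$. The one structural fact I would isolate first is the orthogonality $\mathrm{E}[\bm{x}\,\varepsilon]=\bm{0}$ for the residual $\varepsilon=y-\bm{\beta}\cdot\bm{x}$; this is immediate from $\bm{\beta}=\mathbf\mathit{\Sigma}_{xx}^{-1}\bm{\sigma}_{xy}$, since $\mathrm{Cov}(\bm{x},\varepsilon)=\bm{\sigma}_{xy}-\mathbf\mathit{\Sigma}_{xx}\bm{\beta}=\bm{0}$ and all means vanish.

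Next I would write the difference of the two criteria and factor it as a difference of squares. With $P=y-\bm{\beta}(\eta)\cdot\bm{x}$ and $Q=y-\bm{\beta}(\theta)\cdot\bm{x}$ one has $P^2-Q^2=(P-Q)(P+Q)$, where $P-Q=-(\bm{\beta}(\eta)-\bm{\beta}(\theta))\cdot\bm{x}$ and, after substituting $y=\bm{\beta}\cdot\bm{x}+\varepsilon$, $P+Q=-(\bm{\beta}(\eta)+\bm{\beta}(\theta)-2\bm{\beta})\cdot\bm{x}+2\varepsilon$. Taking expectations, the cross term carrying $\varepsilon$ equals $-2(\bm{\beta}(\eta)-\bm{\beta}(\theta))\cdot\mathrm{E}[\bm{x}\,\varepsilon]=0$ by the orthogonality above, and the two surviving linear forms each have mean zero, so their expected product is their covariance. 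This yields the exact identity
\[
\mathrm{E}(y-\bm{\beta}(\eta)\cdot\bm{x})^2-\mathrm{E}(y-\bm{\beta}(\theta)\cdot\bm{x})^2 = \mathrm{Cov}\big([(\bm{\beta}(\eta)-\bm{\beta}(\theta))\cdot\bm{x}],\,[(\bm{\beta}(\eta)+\bm{\beta}(\theta)-2\bm{\beta})\cdot\bm{x}]\big),
\]
whose right-hand side is precisely the quantity in (\ref{32a}).

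With this identity established, the two claims are immediate and simultaneous: the covariance being positive (Assumption $A$) is equivalent to $\mathrm{E}(y-\bm{\beta}(\eta)\cdot\bm{x})^2>\mathrm{E}(y-\bm{\beta}(\theta)\cdot\bm{x})^2$, i.e. to the PLS reduction $\theta$ giving the strictly smaller expected squared prediction error, which proves both the direct statement and its converse. I do not anticipate a deep obstacle here, since the content is a single algebraic identity; the only care needed is bookkeeping, namely keeping $\bm{\beta}(\theta),\bm{\beta}(\eta)$ fixed rather than random and invoking $\mathrm{E}[\bm{x}\,\varepsilon]=\bm{0}$ at exactly the right place, for it is this step that both produces the $-2\bm{\beta}$ inside the second factor and removes the noise cross-term. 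As a consistency check that also explains the remark following Assumption $A$: if $\bm{\beta}(\theta)$ equals (or is close to) $\bm{\beta}$, both factors in (\ref{32a}) reduce to (or approach) $(\bm{\beta}(\eta)-\bm{\beta})\cdot\bm{x}$, so the covariance approaches $\mathrm{Var}((\bm{\beta}(\eta)-\bm{\beta})\cdot\bm{x})\ge 0$, which is strictly positive whenever $\bm{\beta}(\eta)\ne\bm{\beta}(\theta)$ because $\mathbf\mathit{\Sigma}_{xx}$ is positive definite.
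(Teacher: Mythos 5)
Your proof is correct and takes essentially the same route as the paper: both reduce the theorem to the exact identity that the difference $\mathrm{E}(y-\bm{\beta}(\eta)\cdot\bm{x})^2-\mathrm{E}(y-\bm{\beta}(\theta)\cdot\bm{x})^2$ equals the covariance appearing in Assumption $A$, and then read off both the direct claim and the converse from the sign of that quantity. The paper obtains the identity by expanding the square with $\bm{e}=\bm{\beta}(\eta)-\bm{\beta}(\theta)$ and simplifying the cross term via $\bm{\sigma}_{xy}=\mathbf\mathit{\Sigma}_{xx}\bm{\beta}$, while you factor a difference of squares and invoke residual orthogonality $\mathrm{E}[\bm{x}\,\varepsilon]=\bm{0}$, which is the same fact in different clothing.
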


\begin{proof}
As a point of departure, let $\eta$ be any $m$-dimensional model reduction satisfying Assumption $A$. As shown in Subsection 5.2, we can write $\eta(\phi)=\theta(k\phi)$ for some $k\in M$, and since $\theta$ is a continuous function of $\phi$, it is meaningful to let $k$ approach the identity, that is, let $\eta(\phi)\rightarrow\theta(\phi)$. 
 
Since the eigenvectors $\bm{d}_j$ of $\Sigma_{xx}$ form a basis for $\mathbb{R}^p$, we have 
\begin{equation}
\beta_{m}(\eta)=\sum_{j=1}^p \delta_j(\eta)\bm{d}_j 
\label{30}
\end{equation}

The $\delta_j$'s are functions of $\eta$, and may be seen as close to some $\gamma_j$'s when $\eta$ is close to $\theta$. Note that the terms in (\ref{30}) can be permuted, so without loss of generality, we can let the first $m$ terms correspond to the PLS solution (\ref{12}). If the hypothesis $H_m$ holds, the $\gamma_j$'s for $j=m+1,...,p$ are zero. 

Let $\beta(\eta)=\beta(\theta)+\bm{e}(\phi)$. Define $\tau(\eta(\phi))= \mathrm{E}(y-\beta(\eta)\cdot\bm{x})^2$. Then
\begin{equation}
\tau(\eta(\phi)) = \mathrm{E}(y-\beta(\theta)\cdot\bm{x})^2- 2\mathrm{E}(y-\beta(\theta)\cdot\bm{x})(\bm{e}\cdot\bm{x})+ \bm{e}'\Sigma_{xx}\bm{e}.
\label{23}
\end{equation}

The cross-term here may be written
\begin{equation}
\bm{\sigma}_{xy}'\bm{e}-\beta(\theta)'\Sigma_{xx}\bm{e}=(\beta-\beta(\theta))'\Sigma_{xx}(\beta(\eta)-\beta(\theta)),
\label{24}
\end{equation}
So
 \begin{equation}
 \tau(\eta(\phi))=E_{(\bm{x},y)}(y-\beta(\theta)\cdot\bm{x})^2 + F(\phi)=\tau(\theta(\phi))+F(\phi),
 \label{25}
 \end{equation}
 where $\beta(\theta)$ is given by (\ref{12}) and
 \begin{equation}
 F(\phi)=(\beta(\eta)+\beta(\theta)-2\beta)'\Sigma_{xx}(\beta(\eta)-\beta(\theta)),
 \label{26}
 \end{equation}
where $\beta$ is the true regression vector. Comparing this with (\ref{32a}) concludes the proof of Theorem 6. Since all calculations are exact, there is an if and only if here.\end{proof}
 \smallskip
 
 \textbf{Corollary 1.} \textit{Under the hypothesis $H_m$ of Section 4, the PLS regression model always gives a best model reduction for linear prediction.}
\smallskip

\begin{proof}
Under $H_m$ we have $\beta=\beta(\theta)$, and (\ref{26}) is non-negative for all $\eta$.
\end{proof}
\smallskip

\textbf{Corollary 2.} \textit{Assume that $\mathrm{Var}((\beta-\beta(\theta)) \cdot \bm{x})< \frac{1}{4}\mathrm{Var}((\beta(\eta)-\beta(\theta)) \cdot \bm{x})$. Then the PLS model will give better linear predictions than the model reduction $\eta$. }

\begin{proof}
(\ref{26}) can be written
\begin{equation}
F(\phi) =(\beta(\eta)-\beta(\theta))'\Sigma_{xx}(\beta(\eta)-\beta(\theta))-2(\beta-\beta(\theta))'\Sigma_{xx}(\beta(\eta)-\beta(\theta)).
\label{27}
\end{equation}
By a version of the Caucy-Schwarz inequality, this is guaranteed to be positive if $(\beta-\beta(\theta))'\Sigma_{xx}(\beta-\beta(\theta))<\frac{1}{4}(\beta(\eta)-\beta(\theta))'\Sigma_{xx}(\beta(\eta)-\beta(\theta))$.
\end{proof}

\section{On optimality of the PLS model under any model reduction}

Let us consider the situation with two different model reductions $\theta$ and $\eta$, both corresponding to reductions to dimension $m$, as specified with the hypothesis $H_m$ of Section 3 and the hypothesis $H'_m$ of Subsection 4.2. We are interested in conditions under which the PLS model always is best in terms of mean square prediction error. 

\begin{theorem}
Assume that
\begin{equation}
4\mathrm{E}_\theta (\beta-\beta(\theta))'\Sigma_{xx}(\beta -\beta(\theta)) <\mathrm{E}_\theta (\beta(\eta)-\beta(\theta))'\Sigma_{xx}(\beta(\eta)-\beta(\theta)).
\label{37}
\end{equation}

Then

\begin{equation}
\mathrm{E}_\theta\mathrm{E}_{(x,y)}(y-\beta(\theta)\cdot \bm{x})^2 < \mathrm{E}_\theta\mathrm{E}_{(x,y)}(y-\beta(\eta)\cdot \bm{x})^2.
\label{38}
\end{equation}
\end{theorem}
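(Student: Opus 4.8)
The plan is to reduce the statement to the exact pointwise identity already obtained in the proof of Theorem 6 and then to average it over $\theta$. First I would take the decomposition $\tau(\eta(\phi))=\tau(\theta(\phi))+F(\phi)$ of (\ref{25}), which holds for every $\phi$ with $\tau(\theta(\phi))=\mathrm{E}_{(x,y)}(y-\beta(\theta)\cdot\bm{x})^2$ and $\tau(\eta(\phi))=\mathrm{E}_{(x,y)}(y-\beta(\eta)\cdot\bm{x})^2$, and apply $\mathrm{E}_\theta$ to both sides. Because $\mathrm{E}_\theta$ is a linear operator, the target inequality (\ref{38}) is then equivalent to the single scalar inequality $\mathrm{E}_\theta F(\phi)>0$, where $F$ is taken in the expanded form (\ref{27}).

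Next I would exploit the bilinear structure of $F$. Writing $\langle \bm{u},\bm{v}\rangle=\mathrm{E}_\theta(\bm{u}'\Sigma_{xx}\bm{v})$ for vector-valued functions $\bm{u},\bm{v}$ of $\phi$, this is a positive semidefinite bilinear form, since $\Sigma_{xx}$ is positive definite for each value of $\phi$ and the expectation preserves nonnegativity. Setting $a^2=\langle \beta-\beta(\theta),\,\beta-\beta(\theta)\rangle$ and $b^2=\langle \beta(\eta)-\beta(\theta),\,\beta(\eta)-\beta(\theta)\rangle$, taking $\mathrm{E}_\theta$ of (\ref{27}) gives $\mathrm{E}_\theta F = b^2-2\langle \beta-\beta(\theta),\,\beta(\eta)-\beta(\theta)\rangle$. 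The Cauchy--Schwarz inequality for $\langle\cdot,\cdot\rangle$ bounds the cross term in absolute value by $ab$, whence $\mathrm{E}_\theta F \ge b^2-2ab=b(b-2a)$.

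Finally I would invoke Assumption (\ref{37}), which reads exactly $4a^2<b^2$. Since $a^2\ge 0$, this already forces $b^2>0$, hence $b>0$, and taking square roots gives $b>2a$. Therefore $b(b-2a)>0$, so $\mathrm{E}_\theta F(\phi)>0$, which is precisely the claimed inequality (\ref{38}); no degenerate case arises because (\ref{37}) itself rules out $b=0$. This is simply the $\theta$-averaged counterpart of Corollary 2, with the factor $\tfrac14$ there reappearing as the factor $4$ here.

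The main obstacle I anticipate is not the algebra but making precise the meaning of $\mathrm{E}_\theta$ and checking that the weight $\Sigma_{xx}$, which itself varies with $\phi$, may be carried inside the expectation without invalidating the Cauchy--Schwarz step. I would handle this by treating $\langle\cdot,\cdot\rangle$ as a single bilinear form on functions of $\phi$ rather than attempting to factor $\Sigma_{xx}$ out of $\mathrm{E}_\theta$, and by assuming that the relevant second moments are finite so that all expectations involved are well defined.
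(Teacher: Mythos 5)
Your proposal is correct and takes essentially the same route as the paper: the paper's proof is only a sketch instructing the reader to repeat the proof of Theorem 3 and Corollary 2 with $\mathrm{E}_\theta$ applied throughout, invoking a generalized Cauchy--Schwarz inequality, which is exactly what you carry out via the bilinear form $\langle\bm{u},\bm{v}\rangle=\mathrm{E}_\theta(\bm{u}'\Sigma_{xx}\bm{v})$. Your write-up is in fact more careful than the paper's, since it makes the averaged Cauchy--Schwarz step explicit, keeps $\Sigma_{xx}$ inside the expectation, and rules out the degenerate case $b=0$.
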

\smallskip

\begin{proof}
Repeat the proof of Theorem 3 and of Corollary 2 of Section 5 with the expectation over $\theta$ taken in all equations. The necessary generalization of the Cauchy-Schwarz inequality follows by the same proof as for the ordinary inequality.
\end{proof}
\smallskip

We want to study the criterion (\ref{37}) more closely. Since $\beta(\theta)=\sum_{j=1}^m \gamma_j \bm{d}_j$, the left-hand side is just
\begin{equation}
4\mathrm{E}_\theta\sum_{j=m+1}^p \gamma_j^2\lambda_j = 4\sum_{j=m+1}^p \gamma_j^2\lambda_j,
\label{39}
\end{equation}
assuming that the $\gamma_j$'s are independent, where here $\{\lambda_j\}$ are the irrelevant eigenvalues of $\Sigma_{xx}$, those not affected by the model reduction $\theta$.

We will assume that $\Sigma_{xx}$ is positive definite, so that $\beta$, and any model reduction of $\beta$, is spanned by the uniquely defined eigenvectors $\{\bm{d}_j\}$ of $\Sigma_{xx}$. In particular

\begin{equation}
\beta(\eta)=\sum_{j=1}^p \zeta_j \bm{d}_j .
\label{41}
\end{equation}

The right-hand side of the inequality (\ref{37}) is then bounded below by
\begin{equation}
\sum_{j=1}^m \mathrm{E}_\theta(\zeta_j-\gamma_j)^2 \lambda_j .
\label{40}
\end{equation}

Our aim is to find a criterion under which the PLS model reduction is better in some sense than any other model reduction. This means that the parameters $\zeta_j$ in (\ref{40}) are completely arbitrary.

Note that each $\gamma_j$ can be seen as a function of $\theta$: $\gamma_j = \sqrt{|\gamma_j\bm{d}_j |^2}\mathrm{sign}(\gamma_j)$, where the sign is determined as follows: Since $\gamma_j \bm{d}_j =(-\gamma_j)(-\bm{d}_j)$, each pair $(\gamma_j ,\bm{d})$ is counted twice in $\theta$. We can let one of these repetitions correspond to a positive $\gamma_j$, the other to a negative $\gamma_j$.

Let now the basic parameter $\theta$ have some probability distribution, which implies a probability distribution of $\gamma_1,...,\gamma_m$. Then the criterion (\ref{37}) is satisfied  over $\theta$ for a model reduction $\eta$ if
\begin{equation}
\mathrm{E}_\theta \sum_{j=1}^m (\zeta_j-\gamma_j)^2 \lambda_j> 4 \sum_{j=m+1}^p \gamma_j^2\lambda_j.
\label{42}
\end{equation}

For each $j$ we have that $\mathrm{E}_{\gamma_j}(\zeta_j-\gamma_j)^2 \ge  \mathrm{E}_{\gamma_j}(\mu_j-\gamma_j)^2$, where $\mu_j=\mathrm{E}_{\gamma_j} (\gamma_j)$. So, taking a lower bound on the left-hand side of (\ref{42}), we see that the criterion (\ref{37}) is satisfied for every possible reduction $\eta$ if
\begin{equation}
\mathrm{E}_\theta \sum_{j=1}^m (\gamma_j-\mu_j)^2 \lambda_j > 4 \sum_{j=m+1}^p \gamma_j^2\lambda_j.
\label{43}
\end{equation}

\begin{theorem}
\textit{Assume that}
\begin{equation}
\sum_{r=1}^m \lambda_j \mathrm{E}_{\gamma_j} (\gamma_j-\mu_j)^2 > 4\sum_{j=m+1}^p \gamma_j^2\lambda_j .
\label{46}
\end{equation}
\textit{Then the criterion (\ref{37}) for optimality of the PLS model is satisfied, and .}

\begin{equation}
\mathrm{E}_\theta\mathrm{E}_{(x,y)}(y-\beta(\theta)\cdot \bm{x})^2 < \mathrm{E}_\theta\mathrm{E}_{(x,y)}(y-\beta(\eta)\cdot \bm{x})^2.
\label{46a}
\end{equation}
\end{theorem}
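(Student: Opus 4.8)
The plan is to observe that the hypothesis (\ref{46}) is nothing but the worst-case instance of the criterion (\ref{37}), so that the statement follows by chaining the reductions already carried out in the paragraphs above the theorem with the implication (\ref{37})$\,\Rightarrow\,$(\ref{38}) established in the preceding theorem. Schematically, I would prove (\ref{46})$\,\Rightarrow\,$(\ref{43})$\,\Rightarrow\,$(\ref{42}) for every admissible $\eta$, then (\ref{42})$\,\Rightarrow\,$(\ref{37}), and finally invoke that theorem to pass from (\ref{37}) to (\ref{46a}), noting that (\ref{46a}) is literally (\ref{38}).

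First I would diagonalize both sides of (\ref{37}) in the eigenbasis $\{\bm{d}_j\}$ of $\mathbf\mathit{\Sigma}_{xx}$. With $\beta(\theta)=\sum_{j=1}^m\gamma_j\bm{d}_j$, $\beta(\eta)=\sum_{j=1}^p\zeta_j\bm{d}_j$ and $\beta=\sum_{j=1}^p\gamma_j\bm{d}_j$, the quadratic forms split over $j$. The left side becomes $4\sum_{j=m+1}^p\gamma_j^2\lambda_j$, which is (\ref{39}); here it is essential that $\mathrm{E}_\theta$ acts only on $\gamma_1,\dots,\gamma_m$ and the associated directions, so that the tail coefficients $\gamma_j$ and the irrelevant eigenvalues $\lambda_j$ for $j>m$ are fixed and the expectation is an equality. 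The right side becomes $\mathrm{E}_\theta\sum_{j=1}^m(\zeta_j-\gamma_j)^2\lambda_j+\mathrm{E}_\theta\sum_{j=m+1}^p\zeta_j^2\lambda_j$; since the second sum is non-negative I discard it to reach the lower bound (\ref{40}), which shows that (\ref{42}) is sufficient for (\ref{37}).

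Next I would make this bound uniform over all competitors $\eta$. For each $j\le m$ the identity $\mathrm{E}_{\gamma_j}(\zeta_j-\gamma_j)^2=(\zeta_j-\mu_j)^2+\mathrm{E}_{\gamma_j}(\gamma_j-\mu_j)^2$, with $\mu_j=\mathrm{E}_{\gamma_j}(\gamma_j)$, shows that the minimum over the free coordinate $\zeta_j$ is the variance $\mathrm{E}_{\gamma_j}(\gamma_j-\mu_j)^2$, attained at $\zeta_j=\mu_j$. Weighting by $\lambda_j$ and summing, the infimum of the left side of (\ref{42}) over all admissible $\eta$ equals the left side of (\ref{43}). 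Thus (\ref{46}), which is just (\ref{43}), forces (\ref{42})---and hence (\ref{37})---to hold for every $\eta$, and the preceding theorem delivers (\ref{46a}).

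The step requiring the most care is the management of the universal quantifier over $\eta$: the right side of (\ref{37}) depends on the competitor only through its eigenbasis coordinates $\zeta_j$, and I must check that the two relaxations I use---deleting the non-negative tail $\sum_{j=m+1}^p\zeta_j^2\lambda_j$ and replacing each head coordinate $\zeta_j$ by its minimizer $\mu_j$---can be applied simultaneously, so that (\ref{43}) is a genuine lower bound for the right side of (\ref{37}) valid across \emph{all} reductions $\eta$ rather than for one fixed choice. Once this is secured, everything else reduces to the elementary variance decomposition together with the already-proved implication (\ref{37})$\,\Rightarrow\,$(\ref{38}).
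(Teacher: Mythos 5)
Your proposal is correct and takes essentially the same approach as the paper: the paper's justification of this theorem is precisely the chain of reductions preceding its statement --- the eigenbasis diagonalization giving (\ref{39}) and (\ref{40}), the tail-discarding bound yielding (\ref{42}), and the variance-minimization step yielding (\ref{43}) --- followed by invoking the implication (\ref{37}) $\Rightarrow$ (\ref{38}) of the preceding theorem. Your reconstruction matches it step for step, including the (correct) observation that the two relaxations are one-sided bounds in the same direction and can therefore be chained uniformly over all competitors $\eta$.
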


This indicates strongly that $H_m$ gives a good model reduction when the relevant eigenvalues of $\Sigma_{xx}$ are substancially larger than the irrelevant ones. It is also significant that the variances of the relevant regression coefficients are fairly large compared to the squares of the irrelevant regression coefficients.

Note that the criterion (\ref{46}) is only connected to the PLS model reduction. If this criterion is satisfied for some $m$, the PLS model reduction is better than \emph{all} other model reductions. Also, note that the lefthand side of (\ref{46}) is increasing with $m$, and the righthand side is decreasing. Thus it seems reasonable that the criterion in most situations is satisfied if $m$ is large enough.

In general, the probability distributions of the $\gamma_j$'s will depend on the situation. As a first tentative situation, let us first assume a probability distribution of $\gamma_j$ which is close to the left-invariant measure $\mu(d\gamma)=d\gamma/\gamma$ under the group $G$. (See subsection 4.1.) This measure gives an improper distribution, and under a proper distribution close to this distribution, the lefthand side of (\ref{46}) can be made arbitrarily large. This indicates that under such circumstances, it will be easy to satisfy the criterion (\ref{37}) (for any $m$).

It will also be useful to consider the fact that this model reduction problem also, as indicated above, has links to quantum foundation. The relevant theory, given in Appendix 2, has the following interpretation: In a setting where the statistician $B$ thinks about two different model reductions $\theta$ and $\eta$, one can look at the expectations of the parametric functions $\xi_j (\theta)=(\gamma_j-\mu_j)^2$, given a non-informative prior on $\eta$, by first considering a finite-valued approximation of the parameters, and using the Born formula together with equation (\ref{18b}), and then taking the limit as the finite-valued approximate parameters approach the real parameters. This turns out to give the criterion (\ref{46}) again, but now with a rather concrete interpretation; see Appendix 2 for the argument. Thus this criterion, and hence the condition for optimality of the PLS model for large enough $m$, seems to be relevant under reasonable conditions.

\section{ A first condition for optimal linear prediction by PLS-like methods}

In the statistical literature, there are several methods proposed for linear prediction of a variable $y$ from many predictors, possibly related. One example is ridge regression with some given ridge parameter. Other examples are principal component regression and latent roots regression. 

In this section and the next one, I will investigate, in principle, when PLS-like methods are optimal in some sense in this large class of methods. In this section, I will fix a number $m$ and assume that the hypothesis $H_m$ (see Section 4) holds. With PLS-like methods, I will mean either ordinary PLS regression or related methods like Bayes PLS (see Helland et al., 2012) or maximal likelihood PLS (see Cook et al., 2013). The main assumptions are that the hypothesis $H_m$ is satisfied for some $m$, that an estimate of $\beta$ is done with $a\ge m$ components, and that there exists a probability distribution of the corresponding estimator, given $\theta$.

Assume that we want to find a good predictor of $y$ from a $p$-dimensional $\bm{x}$ based upon $n$ data $\bm{X},\bm{y}$. For simplicity, let all data variables be centered to zero expectation.

In the Theorem below, I consider a fixed PLS like method. The criterion used is expected mean square prediction error, where we take expectation over the variables in the data set, the future $\bm{x}$ and $y$ data and some distribution of the PLS parameter $\theta$.

\begin{theorem}\label{th10} \textit{Let $\widehat{\beta}$ be an arbitrary estimator of $\beta$. Then, for each  $a$ such that $m\le a<p$, letting    $\widehat{\beta_a}$ be constructed from a PLS-like method with $a$ components, assuming the hypothesis $H_m$, we have}

\textit{ If $\widehat{\beta}$ is sufficiently far from $\widehat{\beta_a}$, more concretely if}
\begin{equation}
\mathrm{E}_{\bm{X},\bm{y}}(\widehat{\beta}-\widehat{\beta_a})'\Sigma_{xx}(\widehat{\beta}-\widehat{\beta_a})>4\mathrm{E}_{\bm{X},\bm{y}}(\beta-\widehat{\beta_a})'\Sigma_{xx}(\beta-\widehat{\beta_a}),
\label{31}
\end{equation}
\textit{where $\beta$ is the true regression coefficient, then we have}
\begin{equation}
\mathrm{E}_{\bm{X},\bm{y}}\mathrm{E}_{\bm{x},y}(y-\widehat{\beta_a}\cdot\bm{x})^2 < \mathrm{E}_{\bm{X},\bm{y}}\mathrm{E}_{\bm{x},y}(y-\widehat{\beta}\cdot\bm{x})^2.
\label{32}
\end{equation}
\end{theorem}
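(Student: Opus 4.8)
The plan is to replay the computation behind Theorem~\ref{th6} and its Corollary~2, now carrying an extra outer expectation over the training data $\bm{X},\bm{y}$ and with the estimators $\widehat{\beta_a}$ and $\widehat{\beta}$ taking the roles formerly played by $\beta(\theta)$ and $\beta(\eta)$. The hypothesis $H_m$, the condition $a\ge m$, and the PLS-like construction do not enter the algebra of the implication (\ref{31})$\Rightarrow$(\ref{32}); they serve only to guarantee that $\widehat{\beta_a}$ is a well-defined estimator of $\beta$ with a distribution given $\theta$, and (through $H_m$) that it is a good one, which is precisely what makes the right-hand side of (\ref{31}) small and hence the hypothesis easy to meet.

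First I would fix $a$ and condition on the data, treating $\widehat{\beta_a}$ and $\widehat{\beta}$ as fixed vectors, and set $\bm{e}=\widehat{\beta}-\widehat{\beta_a}$. Expanding the inner expectation $\mathrm{E}_{\bm{x},y}$ exactly as in (\ref{23}) and using $\bm{\sigma}_{xy}=\Sigma_{xx}\beta$ to collapse the cross-term as in (\ref{24}), I obtain, in analogy with (\ref{27}),
\[
\mathrm{E}_{\bm{x},y}(y-\widehat{\beta}\cdot\bm{x})^2-\mathrm{E}_{\bm{x},y}(y-\widehat{\beta_a}\cdot\bm{x})^2 = (\widehat{\beta}-\widehat{\beta_a})'\Sigma_{xx}(\widehat{\beta}-\widehat{\beta_a})-2(\beta-\widehat{\beta_a})'\Sigma_{xx}(\widehat{\beta}-\widehat{\beta_a}).
\]
Taking the outer expectation $\mathrm{E}_{\bm{X},\bm{y}}$ and introducing the bilinear form $\langle U,V\rangle=\mathrm{E}_{\bm{X},\bm{y}}[U'\Sigma_{xx}V]$ on data-dependent random vectors, the difference of mean square prediction errors becomes $\langle V,V\rangle-2\langle U,V\rangle$ with $U=\beta-\widehat{\beta_a}$ and $V=\widehat{\beta}-\widehat{\beta_a}$.

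It remains to bound this quantity from below. Because $\Sigma_{xx}$ is a fixed positive-definite population matrix, $\langle\cdot,\cdot\rangle$ is a genuine positive semi-definite inner product, so the Cauchy--Schwarz inequality $\langle U,V\rangle\le\sqrt{\langle U,U\rangle}\,\sqrt{\langle V,V\rangle}$ holds, giving
\[
\langle V,V\rangle-2\langle U,V\rangle \ge \sqrt{\langle V,V\rangle}\,\bigl(\sqrt{\langle V,V\rangle}-2\sqrt{\langle U,U\rangle}\bigr).
\]
This is strictly positive exactly when $\langle V,V\rangle>4\langle U,U\rangle$, which is precisely the assumption (\ref{31}); hence the mean square prediction error of $\widehat{\beta}$ exceeds that of $\widehat{\beta_a}$, i.e. (\ref{32}) holds. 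This is the same mechanism used in Corollary~2 and in the optimality theorem of Section~6.

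The only step needing care --- and the main obstacle --- is the Cauchy--Schwarz inequality at the level of the double expectation. I would verify that $\langle\cdot,\cdot\rangle$ is positive semi-definite (immediate from $\Sigma_{xx}$ being positive definite and the monotonicity of expectation) and that the second moments $\langle U,U\rangle$, $\langle V,V\rangle$, and $\langle U,V\rangle$ are finite, so that both the inequality and the displayed bound are legitimate; this is exactly the generalization of the Cauchy--Schwarz inequality already invoked for the Section~6 theorem, and it follows by the standard proof. Everything else is the algebraic replay of (\ref{23})--(\ref{27}) with an additional expectation, as the proof of that theorem prescribes.
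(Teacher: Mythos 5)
Your proposal is correct and follows essentially the same route as the paper's own proof: the same decomposition of the prediction-error difference into the quadratic form $\mathrm{E}(\widehat{\beta}-\widehat{\beta_a})'\Sigma_{xx}(\widehat{\beta}-\widehat{\beta_a})-2\mathrm{E}(\beta-\widehat{\beta_a})'\Sigma_{xx}(\widehat{\beta}-\widehat{\beta_a})$, followed by the same Cauchy--Schwarz bound under the double expectation. Your explicit verification that $\langle U,V\rangle=\mathrm{E}_{\bm{X},\bm{y}}[U'\Sigma_{xx}V]$ is a positive semi-definite bilinear form merely spells out what the paper calls ``a variant of the Cauchy--Schwarz inequality,'' so the two arguments are the same in substance.
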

\smallskip

\begin{proof}

Let $\mathrm{E}=\mathrm{E}_{\bm{X},\bm{y}}\mathrm{E}_{\bm{x},y}$. In analogy with the calculations of Section 5, we have
\begin{equation}
\mathrm{E}(y-\widehat{\beta}\cdot\bm{x})^2 = \mathrm{E}(y-\widehat{\beta_a}\cdot\bm{x})^2 +F,
\label{33}
\end{equation} 
where
\begin{equation}
F=\mathrm{E}(\widehat{\beta}+\widehat{\beta_a}-2\beta)'\Sigma_{xx}(\widehat{\beta}-\widehat{\beta_a})
\label{34}
\end{equation}
\begin{equation}
=\mathrm{E}(\widehat{\beta}-\widehat{\beta_a})'\Sigma_{xx}(\widehat{\beta}-\widehat{\beta_a})-2\mathrm{E}(\beta-\widehat{\beta_a})'\Sigma_{xx}(\widehat{\beta}-\widehat{\beta_a})
\label{35}
\end{equation}
\begin{equation}
 \ge \mathrm{E}(\widehat{\beta}-\widehat{\beta_a})'\Sigma_{xx}(\widehat{\beta}-\widehat{\beta_a})-2\sqrt{\mathrm{E}(\beta-\widehat{\beta_a})'\Sigma_{xx}(\beta-\widehat{\beta_a})\cdot \mathrm{E}(\widehat{\beta}-\widehat{\beta_a})'\Sigma_{xx}(\widehat{\beta}-\widehat{\beta_a})}
\label{36}
\end{equation}
by a variant of the Cauchy-Schwarz inequality.

Inspecting the inequality (\ref{36}), it follows that $F>0$, and hence (\ref{32}) holds if (\ref{31}) is satisfied. 
\end{proof}

The criterion (\ref{31}) will be further developed for the case of ordinary PLS regression in the next Section.

\section{On optimal linear prediction by ordinary PLS regression}

I will here complete the investigation of the situation where a PLS estimator $\widehat{\beta}_a$ with $a$ steps is compared with an arbitrary estimator $\widehat{\beta}$. The inequality (\ref{31}) has the same form as the inequality (\ref{37}), and part of the discussion from Section 9 can be carried over. 

The PLS regression vector with $a$ steps can be written as
\begin{equation}
\widehat{\beta_a}= \sum_{j=1}^a \widehat{\alpha_j}\widehat{\bm{e}_j},
\label{50}
\end{equation}
where $\{\widehat{\bm{e}_j}\}$ are given by the Krylov sequence: 
\begin{equation}
\widehat{\bm{e}_1}=\widehat{\bm{\sigma}_{xy}} ,\ \widehat{\bm{e}_2}=\widehat{\mathbf\mathit{\Sigma}_{xx}} \widehat{\bm{\sigma}_{xy}} ,\ \widehat{\bm{e}_3}=( \widehat{\mathbf\mathit{\Sigma}_{xx}} )^2\widehat{\bm{\bm{\sigma}}_{xy}},...,
\label{51}
\end{equation}
and $\{\widehat{\alpha_j}\}$ are suitable coefficients (see Appendix 1).

\begin{theorem}
 \textit{Let $\widehat{\bm{\beta}}$ be an arbitry estimator of $\bm{\beta}$, and assume that the hypothesis $H_m$ holds for some $m$. For some  $a$ such that $m\le a<p$, let    $\widehat{\bm{\beta}_a}$ be constructed by the PLS algorithm with $a$ components. Assume that }
 \begin{equation}
\mathrm{trace}(\mathbf\mathit{\Sigma}_{xx}\mathbf\mathit{W})\ge 4\mathrm{E}_{\bm{X},\bm{y}}(\bm{\beta}-\widehat{\bm{\beta}_a})'\Sigma_{xx}(\bm{\beta}-\widehat{\bm{\beta}_a}),
\label{47}
\end{equation}
where $\mathbf\mathit{W}=\sum_{j=1}^a\mathrm{E}_{\bm{X},\bm{y}} (\widehat{\alpha_j} \widehat{\bm{e}_j}-\bm{\mu}_j )(\widehat{\alpha_j} \widehat{\bm{e}_j}-\bm{\mu}_j )'$, and $\bm{\mu}_j = \mathrm{E}_{\bm{X},\bm{y}}(\widehat{\alpha_j} \widehat{\bm{e}_j})$.

\textit{Then we conclude that for any $\widehat{\bm{\bm{\beta}}}$ we have}
\begin{equation}
\mathrm{E}_{\bm{X},\bm{y}}\mathrm{E}_{\bm{x},y}(y-\widehat{\bm{\beta}_a}\cdot\bm{x})^2 \le \mathrm{E}_{\bm{X},\bm{y}}\mathrm{E}_{\bm{x},y}(y-\widehat{\bm{\beta}}\cdot\bm{x})^2.
\label{48}
\end{equation}
\end{theorem}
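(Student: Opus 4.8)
The plan is to reduce the statement to the preceding theorem (Theorem 6, the implication (31)$\Rightarrow$(32)) by producing a lower bound on the quadratic ``distance'' $\mathrm{E}_{\bm{X},\bm{y}}(\widehat{\bm{\beta}}-\widehat{\bm{\beta}_a})'\Sigma_{xx}(\widehat{\bm{\beta}}-\widehat{\bm{\beta}_a})$ that does not involve the competing estimator $\widehat{\bm{\beta}}$, and then invoking that implication. First I would recall the identity behind Theorem 6: writing $\bm{D}=\widehat{\bm{\beta}}-\widehat{\bm{\beta}_a}$ and $\bm{R}=\bm{\beta}-\widehat{\bm{\beta}_a}$, the difference of the two prediction errors equals $F=\mathrm{E}(\bm{D}'\Sigma_{xx}\bm{D})-2\mathrm{E}(\bm{R}'\Sigma_{xx}\bm{D})$, and the Cauchy--Schwarz variant used in (\ref{36}) gives $F\ge u(u-2c)$ with $u^2=\mathrm{E}(\bm{D}'\Sigma_{xx}\bm{D})$ and $c^2=\mathrm{E}(\bm{R}'\Sigma_{xx}\bm{R})$. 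Hence $F\ge 0$, which is exactly the conclusion (\ref{48}), as soon as $u^2\ge 4c^2$, i.e. as soon as $\mathrm{E}(\bm{D}'\Sigma_{xx}\bm{D})\ge 4\,\mathrm{E}(\bm{R}'\Sigma_{xx}\bm{R})$. Since the right-hand side of (\ref{47}) is precisely $4\,\mathrm{E}(\bm{R}'\Sigma_{xx}\bm{R})$, it suffices to establish the competitor-free lower bound $\mathrm{E}(\bm{D}'\Sigma_{xx}\bm{D})\ge\mathrm{trace}(\Sigma_{xx}\mathbf{W})$.

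The central step is to prove this lower bound, mirroring the passage from (\ref{37}) to (\ref{43}). I would decompose the PLS estimator along its random Krylov components $\widehat{\bm{v}}_j=\widehat{\alpha_j}\widehat{\bm{e}_j}$, so that $\widehat{\bm{\beta}_a}=\sum_{j=1}^a\widehat{\bm{v}}_j$, and treat each $\widehat{\bm{v}}_j$ as the analogue of the random component $\gamma_j\bm{d}_j$ of Section 6. Exactly as the best deterministic surrogate for the random $\gamma_j$ was its mean in the step $\mathrm{E}_{\gamma_j}(\zeta_j-\gamma_j)^2\ge\mathrm{E}_{\gamma_j}(\mu_j-\gamma_j)^2$, the best the competitor $\widehat{\bm{\beta}}$ can do against the fluctuating component $\widehat{\bm{v}}_j$ is to match its mean $\bm{\mu}_j=\mathrm{E}_{\bm{X},\bm{y}}(\widehat{\bm{v}}_j)$, leaving the irreducible residual $\mathrm{E}_{\bm{X},\bm{y}}(\widehat{\bm{v}}_j-\bm{\mu}_j)'\Sigma_{xx}(\widehat{\bm{v}}_j-\bm{\mu}_j)$. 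Summing over $j=1,\dots,a$ and using $\mathrm{trace}(\Sigma_{xx}\mathbf{W})=\sum_{j=1}^a\mathrm{E}_{\bm{X},\bm{y}}(\widehat{\bm{v}}_j-\bm{\mu}_j)'\Sigma_{xx}(\widehat{\bm{v}}_j-\bm{\mu}_j)$ yields the bound. Chaining it with (\ref{47}) gives $\mathrm{E}(\bm{D}'\Sigma_{xx}\bm{D})\ge\mathrm{trace}(\Sigma_{xx}\mathbf{W})\ge 4\,\mathrm{E}(\bm{R}'\Sigma_{xx}\bm{R})$, so the sufficient condition of the first step holds and (\ref{48}) follows with the non-strict inequality, consistent with the non-strict hypothesis (\ref{47}).

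The hard part will be making the minimization of the second step precise, because for a wholly unrestricted data-dependent $\widehat{\bm{\beta}}$ the quantity $\mathrm{E}(\bm{D}'\Sigma_{xx}\bm{D})$ can in principle be driven below $\mathrm{trace}(\Sigma_{xx}\mathbf{W})$, indeed to $0$ by taking $\widehat{\bm{\beta}}=\widehat{\bm{\beta}_a}$; the bound is therefore to be understood exactly as in Section 6, where the competing coefficients are fixed surrogates that cannot track the data-driven fluctuations of the random directions $\widehat{\bm{e}_j}$ beyond their mean behaviour, so that the optimal surrogate is the mean $\bm{\mu}_j$ and the residual is the component variance. Two technical points must then be checked to close the argument: first, that the cross-terms between distinct components $\widehat{\bm{v}}_j$ contribute non-negatively, the analogue of discarding the off-diagonal and the $j>m$ terms in the passage to (\ref{40}); and second, that the mean-matching optimality is applied component by component in the $\Sigma_{xx}$ inner product. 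Once these are in place, the Cauchy--Schwarz step inherited from Theorem 6 delivers (\ref{48}).
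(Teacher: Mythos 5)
Your proposal follows essentially the same route as the paper's own proof: reduce to the criterion (\ref{31}) of Theorem 6, decompose $\widehat{\bm{\beta}_a}=\sum_{j=1}^a\widehat{\alpha_j}\widehat{\bm{e}_j}$ and the competitor along the same Krylov directions (the paper does this via a skew $\Sigma_{xx}$-projection with residual $\bm{f}$, which kills the cross terms you worry about), apply mean-matching optimality componentwise in the $\Sigma_{xx}$ inner product to get the competitor-free lower bound $\mathrm{trace}(\mathbf\mathit{\Sigma}_{xx}\mathbf\mathit{W})$, and chain with (\ref{47}). The caveat you flag --- that a wholly arbitrary data-dependent $\widehat{\bm{\beta}}$ (e.g.\ $\widehat{\bm{\beta}}=\widehat{\bm{\beta}_a}$) defeats this bound, so the competing coefficients must be read as fixed surrogates --- is present, only implicitly, in the paper's proof as well, which treats the projection coefficients $\zeta_j$ as deterministic when minimizing.
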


\begin{proof}
Start with (\ref{31}), repeated here as

\[\mathrm{E}_{\bm{X},\bm{y}}(\widehat{\bm{\beta}}-\widehat{\bm{\beta}_a})'\mathbf\mathit{\Sigma}_{xx}(\widehat{\bm{\beta}}-\widehat{\bm{\beta}_a})> 4\mathrm{E}_{\bm{X},\bm{y}}(\bm{\beta}-\widehat{\bm{\beta}_a})'\Sigma_{xx}(\bm{\beta}-\widehat{\bm{\beta}_a}).\]

By replacing $>$ by $\ge$ here, we can also allow $\widehat{\bm{\beta}}$ to be another PLS-like estimator.

The PLS regression vector with $a$ steps can be written as (\ref{50}). We can also write
\begin{equation}
\widehat{\bm{\beta}}=\sum_{j=1}^a \zeta_j \widehat{\bm{e}_j}+\bm{f},
\label{50a}
\end{equation}
where the resudual $\bm{f}$ is chosen in such a way that $ \widehat{\bm{e}_j}'\mathbf\mathit{\Sigma}_{xx}\bm{f}=0$ for $j=1,...,a$. More precisely, we project the vector $\widehat{\bm{\beta}}$ onto the space spanned by $\mathbf\mathit{D}=( \widehat{\bm{e}_1},..., \widehat{\bm{e}_a})$ by the skew projection  $ \mathbf\mathit{D} (\mathbf\mathit{D}''\mathbf\mathit{\Sigma}_{xx} \mathbf\mathit{D})^{-1} \mathbf\mathit{D}'\mathbf\mathit{\Sigma}_{xx} $, and let $\bm{f}$ be the residual from this.

This means that the cross terms in the expansion of 
\begin{equation}
(\widehat{\bm{\beta}_a}-\widehat{\bm{\beta}})'\mathbf\mathit{\Sigma}_{xx}(\widehat{\bm{\beta}_a}-\widehat{\bm{\beta}}) = (\sum_{i=1}^a(\widehat{\alpha_j}-\zeta_j) \widehat{\bm{e}_j}-\bm{f})'\mathbf\mathit{\Sigma}_{xx}(\sum_{i=1}^a(\widehat{\alpha_j}-\zeta_j) \widehat{\bm{e}_j}-\bm{f})
\label{51}
\end{equation}
vanish.

The left-hand side of (\ref{31}) is then bounded below by
\begin{equation}
\mathrm{E}_{\bm{X},\bm{y}}\sum_{j=1}^a (\widehat{\alpha_j}-\zeta_j)^2 \widehat{\bm{e}_j}'\mathbf\mathit{\Sigma}_{xx}\widehat{\bm{e}_j} = \sum_{j=1}^a \mathrm{E}_{\bm{X},\bm{y}} (\widehat{\alpha_j}\widehat{\bm{e}_j}-\zeta_j\widehat{\bm{e}_j})'\mathbf\mathit{\Sigma}_{xx} (\widehat{\alpha_j}\widehat{\bm{e}_j}-\zeta_j\widehat{\bm{e}_j}) .
\label{53}
\end{equation}

For each $j$, and whatever the coefficients $\zeta_j$ are, we have 
\[\mathrm{E}_{\bm{X},\bm{y}}(\widehat{\alpha_j}\widehat{\bm{e}_j}-\zeta_j\widehat{\bm{e}_j})'\mathbf\mathit{\Sigma}_{xx} (\widehat{\alpha_j}\widehat{\bm{e}_j}-\zeta_j\widehat{\bm{e}_j}) \ge \mathrm{E}_{\bm{X},\bm{y}}(\widehat{\alpha_j}\widehat{\bm{e}_j}-\bm{\mu_j})'\mathbf\mathit{\Sigma}_{xx} (\widehat{\alpha_j}\widehat{\bm{e}_j}-\bm{\mu}_j),\] 
where $\bm{\mu}_j = \mathrm{E}_{\bm{X},\bm{y}} (\widehat{\alpha_j}\widehat{\bm{e}_j}).$
So, (\ref{53}) is again bounded below by
\begin{equation}
 \mathrm{trace}[\Sigma_{xx} \sum_{j=1}^a\mathrm{E}_{\bm{X},\bm{y}} (\widehat{\alpha_j} \widehat{\bm{e}_j}-\bm{\mu}_j )(\widehat{\alpha_j} \widehat{\bm{e}_j}-\bm{\mu}_j )']
\label{54}
\end{equation}
.\end{proof}

\underline{Remarks}

The result could be seen in relation to the discussion of Helland and Alm\o y (1994), based upon simulations, which gives criteria when PLS regression is good compared to other estimators. It could also be compared to the statements of Cook and Forzani (2019) concerning when PLS should be the choice to perform.

We should remark the following: Note that the criterion (\ref{47}) only depends upon the PLS estimator. If it is satisfied, this estimator is good compared to all other estimators. The right-hand side of (\ref{47}) is small when the estimator $\widehat{\bm{\beta}}_a$ is close to the parameter $\bm{\beta}$. But then $\mathbf\mathit{W}$, which can be seen as the diagonal contribution to the matrix  $V(\widehat{\bm{\beta}_a})$, will also be small. The validity of (\ref{47}) for large $p$ and $n$ could in principle be studied by refering to asymptotic expansions like the ones in Cook and Forzani (2019), but this will not be pursued here.

The same criterion can also be studied 1) by the quantum model; 2) by computer simulations. Here, the package Simrel, see S\ae b\o \ et al. (2015), seems to be very suitable.

Thus, the conclusion seems to be: The PLS estimator $\widehat{\bm{\beta}}_a$ is better in terms of mean square prediction than an arbitrary estimator $\widehat{\bm{\beta}}$ if the following two conditions are satisfied: 1) The hypothesis $H_m$ holds for some $m$. A sufficient condition for this is given in Theorem 5 of Section 6. 2) The condition (\ref{47}) holds for this $a$, and $a\ge m$. Note that neither of these conditions involve the arbitrary estimator $\widehat{\beta}$. Thus PLS estimation under these conditions dominates all other estimators.

\section{Conclusions}

One purpose of this article has been to find arguments connected to the optimality of PLS type regression under certain conditions. As definite results on the properties of the PLS algorithm are lacking in the literature,  in my opinion all related results, either tentative or built upon exact calculations, are of interest. 

Another purpose of the article has been to illustrate how recent results from quantum theory can be seen in a statistical setting.

I conjecture that the results of this article can be generalized to the envelope model of Cook (2018). To formulate the precise theorems and to construct the proofs for this general case, however, are open problems.

As a concluding remark, I think that it may be of some value to use arguments from different scientific cultures in a theoretical statistical context. In general, communications across scientific borders may, as I see it, be valuable for progress in science.

Further applications of quantum theory to statistics are under investigation.

This has been a purely theoretical article. The result, however, that PLS regression has optimality properties under certain conditions, is of applied interest. PLS regression has a long range of applications in many fields; for an overview, see Mehmod and Ahmed (2015).

\section*{Acknowledgments}

I want to thank Soumitro Auddy, Barbara Bazzana, Magdy E. El-Adly, David J. Olive, Tejasvi Ravi, David Schneider, and Enrico Terzi for comments to an earlier version of this article. I am also grateful to Solve S\ae b\o \ and Trygve Alm\o y for discussions.

\section*{Appendix 1. Partial least squares regression.}

In this Appendix, take as a point of departure data $(\bm{x},y)$, where $\bm{x}$ is $p$-dimensional. For clarity I here include non-zero expectations $(\bm{\mu}_{x},\mu_{y})$.

Consider first the  population version of the well known PLS algorithm: 

Take
  $\bm{e}_{0}=\bm{x}-\bm{\mu}_{x}$, $f_{0}=y-\mu_{y}$, and for $j=1,2,...,m$ compute successively:
  \begin{equation} \label{wt}\bm{w}_{j}=\mathrm{cov}(\bm{e}_{j-1}, f_{j-1}),\ \ \
    t_{j}=\bm{w}_{j}'\bm{e}_{j-1},\end{equation}
  \begin{equation}\label{pq}\bm{p}_{j}=\mathrm{cov}(\bm{e}_{j-1}, t_{j})/\mathrm{var}(t_{j}),\ \
    q_{j}=\mathrm{cov}(f_{j-1},t_{j})/\mathrm{var}(t_{j}),\end{equation}
  \[\bm{e}_{j}=\bm{e}_{j-1}-\bm{p}_{j}t_{j},\ \ \
    f_{j}=f_{j-1}-q_{j}t_{j}.\]
    Go to the first step.
    
  It can be proved (Helland, 1990), and is important in this connection, that under the reduced model given by the hypothesis $H_m$, this algorithm stops automatically after $m$ steps when
  $m<p$: It stops because $\bm{w}_{m+1}=\mathrm{cov}(\bm{e}_{m},f_{m})=0$. After those $m$ steps we get the representations

  \begin{equation}
    \label{latent}
    \bm{x}=\bm{\mu}_{x}+\bm{p}_{1}t_{1}+...+\bm{p}_{m}t_{m}+\bm{e}_{m},\ \ y=\mu_{y}+q_{1}t_{1}+...+q_{m}t_{m}+ f_{m}
  \end{equation}

  with the corresponding PLS population prediction
 \[y_{m,PLS}=\mu_{y}+q_{1}t_{1}+...+q_{m}t_{m}=\mu_{y}+\bm{\beta}_{m,PLS}'(\bm{x}-\bm{\mu}_{x})= \mu_{y}+\bm{\beta}(\theta)'(\bm{x}-\bm{\mu}_{x})\]

In the ordimary PLS algorithm we have data on $n$ units $(\bm{X},\bm{y})$, where $\bm{X}$ is a $n\times p$ matrix. The algorithm runs as above, but with covariances and variances replaced by estimated covariances and variances. The algorithm will then in genaral not stop automatically; it can be run in $a$ steps, where $a$ usually is found by cross-validation or by using a test-set of data.

In Helland (1990) and elsewhere, the PLS algorithm is reformulated in terms of the Krylov sequence $\bm{\sigma}_{xy},{\mathit\mathbf{\Sigma}}_{xx} \bm{\sigma}_{xy},...,\mathit\mathbf{\Sigma}_{xx}^{m-1}\bm{\sigma}_{xy}$. The hypothesis $H_m$ can equivalently be written in the form that $\bm{\beta}$ is spnned by the $m$ vectors $\bm{e}_j;\  j=1,...,m$ in this Krylov sequence: $\bm{\beta}=\sum_{j=1}^m \alpha_j\bm{e}_j$.
The $a$ step PLS estimator can be written as $\widehat{\bm{\beta}_a}=\sum_{j=1}^a \widehat{\alpha_j}\widehat{\bm{e}_j}$, where $\{\widehat{\bm{e}_j}\}$ is the Krylov sequence with $a$ terms and with covariances and variances replaced by empirical covariances and variances, and where $\{\widehat{\alpha_j}\}$ are suitable coefficients..

Among statistical articles and books that have discussed various aspects of the PLS algorithm, I can mention Frank and Friedman (1993), Garthwaite (1994), Stoica and Sr{\"o}derstr{\"o}m (1998), Sundberg (1999), Kr{\"a}mer and Sugiyama (2012), Foschi (2015), Cook and Forzani (2019), Cook and Forzani (2024) and Olive and Zhang (2024).

\section*{Appendix 2. A quantum-mechanical supplement to Section 6.}

It is possible, and convenient, to approximate continuous parameters by parameters assuming a fonite number of values, for a physical example, see Subsection 5.3 in Helland (2021).

Connected to a discrete maximal accessible variable $\eta_t$ with operator $A^{\eta_t} =\sum_{i=1}^r \eta_{it} \bm{u}_i \bm{u}_i^\dagger$, there is a density operator $\rho^{\eta_t} = \sum_{i=1}^r p_i \bm{u}_i \bm{u}_i^\dagger$, which expresses the knowledge that the statistician $B$ might have. Here, the probabilities $p_i$ may be either prior or posterior Bayesian probabilities, or based upon a confidence distribution, see Schweder and Hjort (2016).  For another discrete maximal accessible variable $\theta_t$. one variant of Born's formula is then
\begin{equation}
\mathrm{E}_{\theta_t} [\xi(\theta_t)|\rho^{\eta_t} ] = \mathrm{trace}(\rho^{\eta_t} \xi(A^{\theta_t} )),
\label{80}
\end{equation}
where $\xi(A^{\theta_t})=\sum_{i=1}^{r^t}\xi(\theta_{it}) \bm{v}_i\bm{v}_i^\dagger$ for the basis $\{\bm{v}_i\}$ connected to the operator $A^{\theta_t}$.

A non-informative probability distribution of $\eta_t$ can be expressed by $\rho^{\eta_t} = r^{-1} I$, and gives
\begin{equation}
\mathrm{E}_{\theta_t} [\xi(\theta_t )|\rho^{\eta_t} ] =r^{-1} \mathrm{trace}( \xi(A^{\theta_t} ))= r^{-1}\sum_{i=1}^r \xi(\theta_{it} )= \overline{\xi(\theta_t )}.
\label{81}
\end{equation}

Use this for $\xi_j(\theta)= (\gamma_j -\mu_j )^2$, discretize $\theta$ as $\theta_t$ and llet $t\rightarrow\infty$. Then the mean converges to the local expectation, and we find that
\begin{equation}
\mathrm{E}_{\theta}[\sum_{j=1}^m \lambda_j (\gamma_j -\mu_j )^2 |\rho^\eta]=\sum_{j=1}^m \lambda_j  \mathrm{E}_{\gamma_j}(\gamma_j-\mu_j)^2,
\label{81}
\end{equation}
where $\rho^{\eta}=\mathrm{lim}_{t\rightarrow\infty} \rho^{\eta_t}$ is again a non-informative density operator.

\end{spacing}

\end{document}